\documentclass[11pt,oneside,a4paper]{article}

\usepackage[english]{babel}	
\usepackage[T1]{fontenc}		
\usepackage[utf8]{inputenc}	
\usepackage{lmodern}	
\usepackage{amsmath}
\usepackage{amsthm}	
\usepackage{amssymb}
\usepackage{mathrsfs}
\usepackage{paralist}	
\usepackage{geometry}			
\usepackage[all]{xy}
\usepackage{xcolor}
\usepackage[colorlinks = true,
            linktocpage = true,
            linkcolor = blue,
            urlcolor  = blue,
            citecolor = blue,
            anchorcolor = black]{hyperref}

\geometry{hmargin=3cm,vmargin=3cm}
	
\theoremstyle{plain}
\newtheorem{thm}{Theorem}
\newtheorem{prop}[thm]{Proposition}
\newtheorem{lem}[thm]{Lemma}
\newtheorem{cor}[thm]{Corollary}

\newcommand{\N}{\mathbb{N}}
\newcommand{\Z}{\mathbb{Z}}
\newcommand{\R}{\mathbb{R}}
\newcommand{\C}{\mathbb{C}}
\newcommand{\F}{\mathscr{F}}
\newcommand{\M}{\mathscr{M}}
\newcommand{\G}{\mathscr{G}}
\newcommand{\K}{\mathscr{K}}
\newcommand{\del}{\partial}

\DeclareMathOperator{\Vol}{Vol}
\DeclareMathOperator{\im}{im}
\DeclareMathOperator{\PD}{PD}
\DeclareMathOperator{\re}{Re}

\date{}
\author{\textsc{Thibault Langlais}\footnote{Mathematical Institute, University of Oxford, Oxford OX2 6GG, United Kingdom. \newline E-mail address: langlais@maths.ox.ac.uk. ORCID iD: \href{https://orcid.org/0000-0002-6434-2988}{0000-0002-6434-2988}.} }
\title{\bfseries{On the incompleteness of $G_2$-moduli spaces  \\ along degenerating families of $G_2$-manifolds}}

\begin{document}

\maketitle

\begin{abstract}
    \noindent{}We derive a formula for the energy of a path in the moduli space of a compact $G_2$-manifold with vanishing first Betti number for the volume-normalised $L^2$-metric. This allows us to give simple sufficient conditions for a path of torsion-free $G_2$-structures to have finite energy and length. We deduce that the compact $G_2$-manifolds produced by the generalised Kummer construction have incomplete moduli spaces. Under some assumptions, we also state a necessary condition for the limit of a path of torsion-free $G_2$-structures to be at infinite distance in the moduli space.
\end{abstract}

    \section{Introduction and motivation}      \label{section:intro}

The aim of this short paper is to provide a simple proof that $G_2$-moduli spaces are not generally complete. More precisely, we prove that certain singular limits of degenerating families of $G_2$-manifolds occur at finite distance in the moduli spaces. To the author's knowledge, this was previously unknown. Before explaining our strategy, we recall some background on $G_2$-geometry and give some motivation.

Let $M$ be an oriented $7$-manifold. A \emph{$G_2$-structure} on $M$ corresponds to the data of a $3$-form $\varphi$ satisfying the following positivity condition:
\begin{equation*}
    (u \lrcorner \varphi_x) \wedge (u \lrcorner \varphi_x) \wedge \varphi_x > 0, ~~~ \forall x \in M, ~ \forall u \in T_x M \backslash \{ 0 \},
\end{equation*}
where $\lrcorner$ denotes the interior product and positivity is relative to the choice of orientation. A $G_2$-structure $\varphi$ naturally induces a Riemannian metric $g_\varphi$ and a volume form $\Vol_\varphi = \Vol_{g_\varphi}$ on $M$. We denote by $*_\varphi$ the associated Hodge-$*$ operator and by $\Theta(\varphi) = *_\varphi \varphi$ the dual $4$-form of $\varphi$. A $G_2$-structure $\varphi$ is said to be \emph{torsion-free} if it is parallel with respect to the Levi--Civita connection of $g_\varphi$. An equivalent condition is to require that $\varphi$ be closed and co-closed \cite{fernandez1982riemannian}. If $\varphi$ is torsion-free, then the holonomy group of the metric $g_\varphi$ is conjugate to a subgroup of $G_2 \subset SO(7)$ in $GL(7)$, which implies that $g_\varphi$ is Ricci-flat \cite{bonan1966sur}. A manifold endowed with a torsion-free $G_2$-structure is called a \emph{$G_2$-manifold}.

One motivation for studying $G_2$-manifolds is that $G_2$, along with $Spin(7) \subset SO(8)$, is one of the two exceptional cases in Berger's list of Riemannian holonomy groups \cite{berger1955groupes} (the original list also contained $Spin(9) \subset SO(16)$, but it was later ruled out in \cite{alekseevsky1968riemannian}). The existence of metrics with full holonomy $G_2$ was first proved by Bryant \cite{bryant1987metrics} using local methods, shortly before the first complete noncompact examples were exhibited by Bryant--Salamon \cite{bryant1989construction}. The first compact examples are due to Joyce \cite{joyce1996compacti,joyce1996compactii}, with a method modelled on the Kummer construction of K3 surfaces \cite{page1978physical,topiwala1987new,lebrun1994kummer}, hence called the \emph{generalised Kummer construction}. Several other constructions were subsequently developed \cite{joyce2000compact,kovalev2003twisted,karigiannis2009desingularization,corti2013asymptotically,corti2015g,joyce2021new,nordstrom2023extra}. At present, all of the known constructions use different geometric methods to produce a compact manifold endowed with a closed $G_2$-structure with small torsion, before applying the general analytical machinery developed by Joyce \cite[Ch. 11]{joyce2000compact} to deform it to a nearby torsion-free $G_2$-structure within the same cohomology class.

Let us now consider a compact oriented $7$-manifold $M$ admitting torsion-free $G_2$-structures. The \emph{moduli space $\M$ of torsion-free $G_2$-structures} on $M$ is defined as the quotient of the space of torsion-free $G_2$-structures by the action of the group of diffeomorphisms of $M$ acting trivially on $H^3(M)$. It has the structure of a smooth manifold of dimension $b^3(M)$, and for any torsion-free $G_2$-structure $\varphi$ the tangent space $T_\varphi \M$ can be identified with the space of harmonic $3$-forms with respect to the metric $g_\varphi$ \cite{joyce1996compacti}. Moreover, there is a local diffeomorphism $\M \rightarrow H^3(M)$ which maps $\varphi \in \M$ to the cohomology class $[\varphi] \in H^3(M)$. This endows $\M$ with the structure of an affine manifold, and in particular $\M$ has a flat connection $D$ obtained by pulling back the natural flat connection of $H^3(M)$. 

When $b^1(M) = 0$, the moduli space $\M$ can be equipped with a Hessian structure. Let $\Vol(\varphi)$ be the volume of $M$ for the metric induced by a $G_2$-structure $\varphi$. As the volume functional is invariant under diffeomorphisms, it descends to a smooth function on the moduli space. We define a smooth function $\F : \M \rightarrow \R$ by
\begin{equation*}
    \F(\varphi) = - 3 \log \Vol(\varphi) .
\end{equation*}
This function owes its interest to the properties of the first and second derivatives \cite{grigorian2009local,karigiannis2009hodge}. The differential $d\F = D\F$ at $\varphi \in \M$ is given by
\begin{equation}    \label{eq:fa}
    d_\varphi \F(\eta) = - \frac{1}{\Vol(\varphi)} \int \eta \wedge \Theta(\varphi) = - \frac{\langle [\eta] \cup [\Theta(\varphi)], [M] \rangle}{\Vol(\varphi)}, ~~~ \forall \eta \in T_{\varphi} \M .
\end{equation}
When $b^1(M) = 0$ the Hessian $Dd\F = D^2\F$ with respect to the flat connection $D$ reads
\begin{equation}    \label{eq:fab}
    D^2_\varphi\F(\eta,\eta^\prime) = \frac{1}{\Vol(\varphi)} \int \eta \wedge *_\varphi \eta^\prime, ~~~ \forall \eta,\eta^\prime \in T_{\varphi} \M .
\end{equation}
Hence the Hessian of $\F$ is positive-definite, and therefore defines a Riemannian metric $\G$ on $\M$, which is just the volume-normalised $L^2$-metric. Thus $(\M,D,\G)$ has the structure of a Hessian manifold, admitting $\F$ as a global potential. Note that it is crucial that $b^1(M) = 0$, for in general the Hessian of $\F$ has signature $(b^3(M) - b^1(M),b^1(M))$. In the present article we are interested in manifolds with full holonomy, which have $b^1(M) = 0$ since their fundamental group is finite \cite{joyce2000compact}. Let us also point out that $(\M,\G)$ is isometric to $\R \times (\M_0,\G_0)$, where $\M_0$ is the moduli space of unit volume torsion-free $G_2$-structures and $\G_0$ the $L^2$-metric. In particular any limit of $G_2$-manifolds with volume diverging to zero or infinity is at infinite distance, and $(\M,\G)$ is complete if and only if $(\M_0,\G_0)$ is.

The interest in the metric $\G$ is notably motivated by the importance of $G_2$-manifolds for M-theory in physics, where $G_2$-compactifications play the same role as Calabi--Yau compactifications in string theory. In this context, $\G$ arises as a kinetic term in the low-energy effective action, in the same way as the natural metrics carried by the K\"{a}hler cone or the moduli space of complex structures of a Calabi--Yau manifold arise from string theory. As part of the swampland programme formulated by Vafa \cite{vafa2005string}, it has been conjectured that all of these metrics should share some common properties. An important component of this programme, related to the so-called swampland distance conjecture \cite{ooguri2007geometry}, would be to characterise which limits are at finite or infinite distance in the moduli spaces. 

A well-understood case is that of the K\"{a}hler cone of a compact K\"{a}hler manifold, where the natural metric also admits a global Hessian potential. The K\"{a}hler cone can be described in terms of the intersection form and the classes of analytic cycles \cite{demailly2004numerical}, and there is a simple necessary and sufficient condition for a cohomology class at the boundary of the cone to be a finite-distance limit \cite{magnusson2012metriques}. Examples where the K\"{a}hler cone is incomplete include Kummer K3 surfaces, where a sequence of hyperk\"{a}hler metrics degenerating to $T^4/\Z_2$ occurs at finite distance. In the case of moduli spaces of polarised Calabi--Yau manifolds, the Weil--Petersson metric can be studied using Hodge-theoretic methods, and there are known examples where the moduli spaces are incomplete as well as some characterisations of finite-distance degenerations \cite{wang1997incompleteness}. These results rely on techniques of complex algebraic geometry, together with the link to Riemannian geometry provided by Yau's solution of the Calabi conjecture \cite{yau1978ricci}. In comparison, $G_2$-manifolds are only amenable to differential-geometric methods and very little is known about the global properties of $(\M,\G)$, except that in some cases $\M$ (and even the quotient of the space of torsion-free $G_2$-structures by the full group of diffeomorphisms) is disconnected \cite{crowley2015analytic}. In this article we will show that the generalised Kummer $G_2$-manifolds have incomplete moduli spaces, as can be expected by analogy with the case of Kummer K3 surfaces.

To prove that the moduli space of a certain $G_2$-manifold is incomplete, the idea is to find a path of torsion-free $G_2$-structures degenerating towards a singular limit and prove that it has finite length in the moduli space. The natural paths to consider are those constructed by gluing-perturbation methods, which are typically indexed by a parameter representing the size of the gluing region. To compute the length of a such a path, we a priori need to differentiate the family of torsion-free $G_2$-structures with respect to the gluing parameter to deduce the velocity vector along the corresponding path in the moduli space and estimate its $L^2$-norm. There is ongoing work by J. Li \cite{li2023resolution} using this approach, but there are many analytical difficulties related to the fact that the torsion-free $G_2$-structures obtained after perturbation are only implicitly defined, making this method hard to implement in detail.

Here we adopt a much simpler approach, in which the analytical difficulties disappear. To circumvent the analysis, the idea is to consider not the length but the energy of a path and make use of the special properties of the metric $\G$. In the next section, we derive an expression for the energy of a curve that involves derivatives only of the cohomology class of the $3$-form and no derivatives of the cohomology class of its dual $4$-form. Interpreting this expression in geometrical terms allows us to give simple sufficient conditions for a path of torsion-free $G_2$-structures to have finite energy and length. In Sect. \ref{section:gluing} we use these conditions to prove that, for the generalised Kummer $G_2$-manifolds constructed in \cite{joyce1996compacti,joyce1996compactii}, the degeneration to a flat orbifold $T^7/\Gamma$ occurs at finite distance in the moduli space. In Sect. \ref{section:infinite}, we consider the case of a path of torsion-free $G_2$-structures whose cohomology classes form a line segment in $H^3(M)$, and prove that if the limit of such a path is at infinite distance then either the volume of $M$ is shrinking to zero or there is a homology class in degree $4$ whose volume is going to infinity. We also discuss how this result compares to the case of K\"{a}hler cones and possible generalisations.

    \section{Length and energy of curves in the moduli space}      \label{section:energy}

Let $M$ be a compact oriented $7$-manifold with $b^1(M)=0$ admitting torsion-free $G_2$-structures. We aim to compute the energy of a path in the moduli space $\M$ for the metric $\G$. We begin with a few remarks about notations and the regularity of paths. 

We will say that a family of torsion-free $G_2$-structures $\{ \varphi_t\}_{t \in (0,T]}$ on $M$ \emph{induces a path of class $C^k$ in $\M$} ($k \in \N \cup \{ \infty \}$) if the path defined by the class of $\varphi_t$ modulo the group of diffeomorphisms acting trivially on $H^3(M)$ is of regularity $C^k$ in $\M$. We emphasize that we do not require the $3$-forms $\varphi_t$ to be of class $C^k$ jointly in the variable $t$ and in local coordinates on $M$, since at no point will we need to consider partial derivatives of $\varphi_t$ with respect to the variable $t$. In practice, if $\varphi_t$ is a continuous family of $G_2$-structures on $M$ and the cohomology class $[\varphi_t]$ defines a path of class $C^k$ in $H^3(M)$, then $\{\varphi_t\}_{t \in (0,T]}$ induces a path of class $C^k$ in $\M$, since the moduli space is locally diffeomorphic to $H^3(M)$. Although the energy is defined for a path in $\M$ which is merely $C^1$, we will need to consider paths of class at least $C^2$ so as to differentiate and integrate by parts some expressions.

If $\{\varphi_t\}_{t \in (0,T]}$ induces a path of class $C^2$ in $\M$, we denote by $\dot \varphi_t \in T_{\varphi_t}\M$ the velocity vector along the induced path in $\M$ and by $\ddot \varphi_t = \frac{D}{dt} \dot \varphi_t \in T_{\varphi_t} \M$ the covariant derivative of $\dot \varphi_t$ along the path for the flat connection $D$. In particular $[\dot \varphi_t] = \frac{d[\varphi_t]}{dt}$ and $[\ddot \varphi_t] = \frac{d^2[\varphi_t]}{dt^2}$.

\begin{prop}    \label{prop:energy}
    Let $\{\varphi_t\}_{t \in (0,T]}$ be a family of torsion-free $G_2$-structures on $M$, inducing a path of class $C^2$ in $\M$. Then for any $\tau \in (0,T]$ the energy of the path $\{\varphi_t \}_{t \in [\tau,T]}$ reads:
    \begin{multline*}
        E_{\tau}^T(\varphi_t) = \frac{1}{\Vol(\varphi_\tau)} \left\langle \left. \frac{d [\varphi_t]}{dt} \right|_{t=\tau} \cup [\Theta(\varphi_\tau)], [M] \right\rangle - \frac{1}{\Vol(\varphi_T)} \left\langle \left. \frac{d [\varphi_t]}{dt} \right|_{t=T} \cup [\Theta(\varphi_T)], [M] \right\rangle \\ + \int_{\tau}^T \frac{1}{\Vol(\varphi_t)}  \left\langle \frac{d^2 [\varphi_t]}{dt^2} \cup [\Theta(\varphi_t)], [M] \right\rangle dt .
    \end{multline*}
\end{prop}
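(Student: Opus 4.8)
The plan is to exploit the Hessian structure of $(\M,D,\G)$ directly, integrating the second derivative of the potential $\F$ along the path rather than estimating $\|\dot\varphi_t\|_\G$ by hand. By \eqref{eq:fab} the energy is
\[ E_\tau^T(\varphi_t) = \int_\tau^T \G(\dot\varphi_t,\dot\varphi_t)\,dt = \int_\tau^T D^2_{\varphi_t}\F(\dot\varphi_t,\dot\varphi_t)\,dt, \]
so everything reduces to understanding $D^2\F$ along the curve. Because $D$ is flat and pulled back from the natural affine structure of $H^3(M)$, the cleanest setting is the global affine chart $\varphi\mapsto[\varphi]\in H^3(M)$: there the Christoffel symbols of $D$ vanish, the velocity and acceleration are literally $\dot\varphi_t\leftrightarrow \frac{d[\varphi_t]}{dt}$ and $\ddot\varphi_t\leftrightarrow\frac{d^2[\varphi_t]}{dt^2}$, and $D^2\F$ is just the ordinary second differential of $\F$ viewed as a smooth function of the cohomology class.

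The key identity is then the second-order chain rule for the composite $t\mapsto \F(\varphi_t)$,
\[ \frac{d^2}{dt^2}\F(\varphi_t) = D^2_{\varphi_t}\F(\dot\varphi_t,\dot\varphi_t) + d_{\varphi_t}\F(\ddot\varphi_t), \]
valid for any affine connection (the correction term $d\F(\ddot\varphi_t)$ records the covariant acceleration), and trivial to verify in the flat chart where it is the elementary chain rule. Since the induced path is of class $C^2$ and $\F$ is smooth on $\M$, the function $t\mapsto\F(\varphi_t)$ is $C^2$; rearranging and applying the fundamental theorem of calculus gives
\[ E_\tau^T(\varphi_t) = \int_\tau^T \frac{d^2}{dt^2}\F(\varphi_t)\,dt - \int_\tau^T d_{\varphi_t}\F(\ddot\varphi_t)\,dt = \left[\frac{d}{dt}\F(\varphi_t)\right]_\tau^T - \int_\tau^T d_{\varphi_t}\F(\ddot\varphi_t)\,dt. \]

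It remains to rewrite the boundary terms and the integrand cohomologically, which is exactly what \eqref{eq:fa} provides. Taking $\eta=\dot\varphi_t$ in $d_\varphi\F(\eta)=-\Vol(\varphi)^{-1}\langle[\eta]\cup[\Theta(\varphi)],[M]\rangle$ yields $\frac{d}{dt}\F(\varphi_t)=-\Vol(\varphi_t)^{-1}\langle\frac{d[\varphi_t]}{dt}\cup[\Theta(\varphi_t)],[M]\rangle$, and evaluating at the two endpoints produces the first two terms of the claimed formula (the sign flip coming from the bracket $[\,\cdot\,]_\tau^T$). Taking instead $\eta=\ddot\varphi_t$, with $[\ddot\varphi_t]=\frac{d^2[\varphi_t]}{dt^2}$, turns $-d_{\varphi_t}\F(\ddot\varphi_t)$ into the stated integrand. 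Collecting the three contributions gives the proposition.

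I do not expect a serious analytic obstacle: the whole point of working with the energy rather than the length, and in the flat chart, is that no derivatives of $\varphi_t$ on $M$ — and crucially no derivatives of the dual class $[\Theta(\varphi_t)]$ — ever appear; only the first two $t$-derivatives of $[\varphi_t]$ enter, and these are exactly what the $C^2$ hypothesis controls. The single point requiring care is the regularity needed to apply the fundamental theorem of calculus to $\frac{d}{dt}\F(\varphi_t)$, which is why the statement is phrased for $C^2$ paths and is guaranteed by the smoothness of $\F$ together with the local diffeomorphism $\M\to H^3(M)$ recalled before the proposition.
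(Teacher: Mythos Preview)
Your proof is correct and follows essentially the same approach as the paper: both exploit the second-order chain rule $\frac{d^2}{dt^2}\F(\varphi_t)=D^2_{\varphi_t}\F(\dot\varphi_t,\dot\varphi_t)+d_{\varphi_t}\F(\ddot\varphi_t)$, integrate it, and then translate the first-order terms via \eqref{eq:fa}. The only cosmetic difference is that the paper packages $-\frac{d}{dt}\F(\varphi_t)$ as the cohomological function $h(t)$ first and differentiates that, whereas you apply the chain rule to $\F(\varphi_t)$ directly and convert to cohomology at the end; the underlying computation and the regularity justification are identical.
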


\begin{proof}
    Let us define the function $h : (0,T] \rightarrow \R$ by
    \begin{equation*}
        h(t) = \frac{1}{\Vol(\varphi_t)} \left\langle \frac{d [\varphi_t]}{dt} \cup [\Theta(\varphi_t)], [M] \right\rangle \cdot
    \end{equation*}
    By \eqref{eq:fa} we have
    \begin{equation*}
        h(t) = - d_{\varphi_t} \F (\dot \varphi_t).
    \end{equation*}
    As $\F$ is smooth and $\{\varphi_t\}_{t \in (0,T]}$ induces a path of class $C^2$ in $\M$, it follows that $h$ is $C^1$. Thus we can differentiate the above equality with respect to $t$, which yields
    \begin{equation*}
        h^\prime(t) = - D^2_{\varphi_t} \F (\dot \varphi_t, \dot \varphi_t) - d_{\varphi_t} \F(\ddot \varphi_t) = - \G_{\varphi_t}(\dot \varphi_t, \dot \varphi_t) - d_{\varphi_t} \F(\ddot \varphi_t) .
    \end{equation*}
    Using \eqref{eq:fa} again we see that
    \begin{equation*}
        d_{\varphi_t} \F(\ddot \varphi_t) = - \frac{1}{\Vol(\varphi_t)} \left \langle \frac{d^2 [\varphi_t]}{dt^2} \cup [\Theta(\varphi_t)] , [M] \right \rangle
    \end{equation*}
    and thus we obtain the identity
    \begin{equation*}
        \G_{\varphi_t}(\dot \varphi_t, \dot \varphi_t) = - h^\prime(t) + \frac{1}{\Vol(\varphi_t)} \left \langle \frac{d^2 [\varphi_t]}{dt^2} \cup [\Theta(\varphi_t)] , [M] \right \rangle .
    \end{equation*}
    Integrating this identity between $\tau$ and $T$ yields the desired result.
\end{proof}

By the Cauchy--Schwarz inequality, the length and energy of the curve $\{\varphi_t\}_{t \in (0,T]}$ satisfy the inequality $L_0^T(\varphi_t)^2 \leq T E_0^T(\varphi_t)$ and therefore we immediately deduce:

\begin{cor}     \label{cor:incompbounds}
    With the same assumptions as in the previous proposition, assume that there exist a constant $C > 0$ and a nonnegative integrable function $A : (0,T] \rightarrow \R$ such that for all $t \in (0,T]$ we have
    \begin{equation*}
        \left| \left \langle \frac{d [\varphi_t]}{dt} \cup [\Theta(\varphi_t)] , [M] \right \rangle \right| \leq C \Vol(\varphi_t) ~~ \text{and} ~~ \left| \left \langle \frac{d^2 [\varphi_t]}{dt^2} \cup [\Theta(\varphi_t)] , [M] \right \rangle \right| \leq A(t) \Vol(\varphi_t) .
    \end{equation*}
    Then the energy and the length of $\{\varphi_t\}_{t \in (0,T]}$ are finite.
\end{cor}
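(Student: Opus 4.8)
The plan is to substitute the two hypotheses directly into the energy formula of Proposition~\ref{prop:energy} and bound each of its three terms. First I would fix $\tau \in (0,T]$ and estimate the two boundary terms: applying the first hypothesis at $t = \tau$ and at $t = T$, each of the quantities
\[
\frac{1}{\Vol(\varphi_\tau)}\left| \left\langle \left.\frac{d[\varphi_t]}{dt}\right|_{t=\tau} \cup [\Theta(\varphi_\tau)], [M]\right\rangle \right|, \qquad \frac{1}{\Vol(\varphi_T)}\left| \left\langle \left.\frac{d[\varphi_t]}{dt}\right|_{t=T} \cup [\Theta(\varphi_T)], [M]\right\rangle \right|
\]
is bounded by $C$, so the two boundary contributions together are at most $2C$, uniformly in $\tau$. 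For the integral term, the second hypothesis gives
\[
\int_\tau^T \frac{1}{\Vol(\varphi_t)}\left| \left\langle \frac{d^2[\varphi_t]}{dt^2}\cup[\Theta(\varphi_t)],[M]\right\rangle\right| dt \le \int_\tau^T A(t)\,dt \le \int_0^T A(t)\,dt,
\]
which is finite since $A$ is assumed integrable. Combining these yields $E_\tau^T(\varphi_t) \le 2C + \int_0^T A(t)\,dt =: K$ for every $\tau \in (0,T]$, with $K$ independent of $\tau$.

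The key point is then to remove the truncation at $\tau$. Since by definition $E_\tau^T(\varphi_t) = \int_\tau^T \G_{\varphi_t}(\dot\varphi_t,\dot\varphi_t)\,dt$ is the integral of a nonnegative quantity, it is nondecreasing as $\tau \downarrow 0$; by the monotone convergence theorem it converges to the improper integral $E_0^T(\varphi_t) = \int_0^T \G_{\varphi_t}(\dot\varphi_t,\dot\varphi_t)\,dt$. The uniform bound passes to this limit, giving $E_0^T(\varphi_t) \le K < \infty$, so the path has finite energy.

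For the length, once the energy is known to be finite, I would simply invoke the Cauchy--Schwarz inequality $L_0^T(\varphi_t)^2 \le T\,E_0^T(\varphi_t)$ recorded just before the statement, which immediately gives $L_0^T(\varphi_t) \le \sqrt{T\,K} < \infty$. This completes the argument.

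The computation is essentially a substitution, so I do not expect any serious obstacle. The only point requiring a little care is the passage $\tau \downarrow 0$: the formula of Proposition~\ref{prop:energy} is established only on compact subintervals $[\tau,T]$, where the $C^2$ regularity and the integration by parts in its proof are valid, so the finiteness of $E_0^T$ on the half-open interval $(0,T]$ must be obtained as a monotone limit of the finite-interval estimates rather than by applying the formula directly at $\tau = 0$.
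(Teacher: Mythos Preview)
Your proposal is correct and is precisely the argument the paper has in mind: the paper merely writes ``we immediately deduce'' after recording the Cauchy--Schwarz inequality, and your write-up spells out exactly the substitution into the energy formula, the uniform bound $E_\tau^T \le 2C + \int_0^T A$, and the monotone passage $\tau \downarrow 0$ that this phrase compresses. There is nothing to add.
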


We finish this part with a few remarks about (co)homology groups. If $M$ a smooth manifold, we denote by  $H^p(M)$ the de Rham cohomology groups, which are isomorphic to the singular cohomology groups with real coefficients. The singular homology groups with real coefficients are denoted by $H_p(M)$ and $\langle \cdot, \cdot \rangle$ denotes the natural pairing between $H^p(M)$ and $H_p(M)$. Any homology class $[C] \in H_p(M)$ can be represented by a smooth singular $p$-cycle $C = \sum a_i \sigma_i$, where $\sigma : \Delta_p \rightarrow M$ are smooth $p$-simplices and $\Delta_p$ denotes the standard oriented $p$-simplex. If $\eta$ is a closed $p$-form on $M$, we have
\begin{equation*}
    \langle [\eta], [C] \rangle = \sum a_i \int_{\Delta_p} \sigma_i^* \eta . 
\end{equation*}
In particular the right-hand side does not depend on a particular choice of representative for $[C]$, and for this reason we will denote (with a slight abuse of notation) $\int_{[C]} \eta = \langle [\eta], [C] \rangle$. In the remainder of the article, all singular chains are assumed to be smooth.

If $g$ is a Riemannian metric on $M$, the volume of a $p$-chain $C = \sum_i a_i \sigma_i$ is defined by
\begin{equation*}
    \Vol(C,g) = \sum_i |a_i| \int_{\Delta_p} \Vol_{\sigma_i^*g} .
\end{equation*}
If $M$ has dimension $7$ and is endowed with a co-closed $G_2$-structure $\varphi$, the dual $4$-form $\Theta(\varphi)$ is a calibration \cite{joyce2007riemannian}. Hence for any $4$-simplex $\sigma$ we have $\pm \sigma^* \Theta(\varphi) \leq \Vol_{\sigma^* g_\varphi}$, and therefore for any $4$-cycle $D$ we have a bound
\begin{equation}    \label{eq:thetaest}
    \left| \int_{[D]} \Theta(\varphi) \right| \leq \Vol(D,g_\varphi) .
\end{equation}
Note that the left-hand side is topological and independent of the choice of representative of $[D]$, whilst the right-hand side is geometric and depends on the choice of $4$-cycle $D$.
    \section{Gluing constructions and incompleteness}       \label{section:gluing}

In this section, we consider a simple model of gluing construction of compact $G_2$-manifolds. Topologically, it can be described as follows. Let $\overline U$ be a compact oriented $7$-manifold with boundary and denote $U = \overline U \backslash \del \overline U$. We denote by $\Sigma_1,\ldots,\Sigma_m$ the connected components of $\del \overline U$, each of which is a compact oriented $6$-manifold. We may assume that there are disjoint neighbourhoods $W_i$ of $\Sigma_i$, diffeomorphic to $(0,1] \times \Sigma_i$, such that $\overline U_0 = \overline U \backslash \coprod_i W_i$ is a manifold with boundary diffeomorphic to $\overline U$ and a deformation retract of $\overline U$. We denote by $U_0$ its interior, which is diffeomorphic to $U$. For each $i$, we consider a compact oriented $7$-manifold $\overline Y_i$ with boundary $\del \overline Y_i = \Sigma_i$, and let $K_i$ be a compact subset of $Y_i = \overline Y_i \backslash \Sigma_i$ such that $Y_i \backslash K_i \simeq (-1,0) \times \Sigma_i$. We may assume that the orientation induced by $\overline Y_i$ on $\Sigma_i$ is the opposite of the one induced by $\overline U$, and that $K_i$ is a compact manifold with boundary diffeomorphic to $\overline Y_i$ and a deformation retract of $\overline Y_i$. Given this data we construct a compact oriented $7$-manifold $M = (U \coprod_i Y_i) / \sim$ by identifying the $i$-th end of $U$ with the end of $Y_i$; that is, the equivalence relation $\sim$ identifies $(s,x_i) \in (0,1) \times \Sigma_i \subset U$ with $(-s,x_i) \in (-1,0) \times \Sigma_i \subset Y_i$. Thus $U$ and each $Y_i$ can be seen as open subsets of $M$, and moreover $M$ can be decomposed as the disjoint union $\overline U_0 \coprod (\amalg_i (0,1) \times \Sigma_i) \coprod (\amalg_i K_i)$.

The real homology groups of $M$ can be deduced from the Mayer--Vietoris exact sequence of the decomposition $M = U \cup (\coprod_i Y_i)$, which reads:
\begin{equation}    \label{eq:mayviet}
    \cdots \rightarrow \oplus_i H_p(\Sigma_i) \rightarrow H_p(U) \oplus (\oplus_i H_p(Y_i)) \rightarrow H_p(M) \rightarrow \oplus_i H_{p-1}(\Sigma_i) \rightarrow \cdots
\end{equation}
We are mainly interested in $p = 3,4$. As $H_p(U) \simeq H_p(\overline U)$ and $H_p(Y_i) \simeq H_p(\overline Y_i)$, the maps $\oplus_i H_p(\Sigma_i) \rightarrow H_p(U) \oplus (\oplus_i H_p(Y_i))$ in \eqref{eq:mayviet} are determined by the long exact sequences of the pairs $(\overline U,\del \overline U)$ and $(\overline Y_i, \del \overline Y_i)$. A particular role is played by the boundary maps
\begin{equation*}
    \delta_i : H_3(\overline Y_i, \del \overline Y_i) \rightarrow H_2(\del \overline Y_i)
\end{equation*}
coming from the exact sequences of $(\overline Y_i, \del \overline Y_i)$. In the next lemma, we show that if all the boundary maps $\delta_i$ are trivial, then $H_3(M)$ has a basis represented by cycles supported away from the gluing region.

\begin{lem}     \label{lem:support}
    Assume that $\delta_i = 0$ for all $i$. Then there are $3$-cycles $C_1,\ldots,C_n$ supported in $U_0$ and $C_{i,1},\ldots,C_{i,n_i}$ supported in $K_i$ such that:
    \begin{enumerate}[(i)]
        \item The homology classes $[C_k],[C_{ij}]$ form a basis of $H_3(M)$.
        \item If $[D_k],[D_{ij}] \in H_4(M)$ is the dual basis for the intersection product, then the classes $[D_{ij}]$ can be represented by cycles supported in $K_i$. 
    \end{enumerate}
\end{lem}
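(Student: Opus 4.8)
The plan is to read off both statements from the Mayer--Vietoris sequence \eqref{eq:mayviet} together with Poincaré--Lefschetz duality on the pieces $\overline U$ and $\overline Y_i$, using the hypothesis $\delta_i=0$ to produce the cycles in (i) and general duality to locate the dual classes in (ii).

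For (i), first I would note that $\delta_i=0$ makes the inclusion-induced map $H_2(\Sigma_i)\to H_2(Y_i)\cong H_2(\overline Y_i)$ injective, since in the long exact sequence of $(\overline Y_i,\partial\overline Y_i)$ this map immediately follows $\delta_i$ and hence has kernel $\im\delta_i=0$. Consequently the left-hand map $\bigoplus_i H_2(\Sigma_i)\to H_2(U)\oplus\bigoplus_i H_2(Y_i)$ in \eqref{eq:mayviet} is injective, so by exactness the connecting map $H_3(M)\to\bigoplus_i H_2(\Sigma_i)$ vanishes and $H_3(U)\oplus\bigoplus_i H_3(Y_i)\to H_3(M)$ is surjective. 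As $\overline U_0$ and $K_i$ are deformation retracts of $\overline U$ and $\overline Y_i$, every class in the image of $H_3(U)$ is represented by a cycle in $U_0$ and every class in the image of $H_3(Y_i)$ by a cycle in $K_i$; extracting a basis from these generators gives the $C_k$ and $C_{ij}$. The subtlety I would build in already here, with (ii) in mind, is that some classes coming from $H_3(Y_i)$ originate on the neck: writing $B_i=\im\big(H_3(\Sigma_i)\to H_3(Y_i)\big)$, the identity $\beta_3\circ\alpha_3=0$ for the maps in \eqref{eq:mayviet} shows that the image of $B_i$ in $H_3(M)$ already lies in $\im\big(H_3(U)\to H_3(M)\big)$. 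I would therefore choose the $C_{ij}$ so that their classes in $H_3(Y_i)$ lie in a fixed complement $S_i$ of $\ker\big(H_3(Y_i)\to H_3(M)\big)+B_i$, which is possible while still reaching a basis because $B_i$ contributes nothing to $H_3(M)$ beyond the $U$-side.

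For (ii), the cross terms are free: a $4$-cycle supported in $K_i$ is disjoint from $U_0$ and from every $K_{i'}$ with $i'\neq i$, so its intersection number with each $C_k$ and each $C_{i'j'}$ ($i'\neq i$) vanishes. Hence it suffices to produce, for each $i$, classes $\xi_{i1},\dots,\xi_{in_i}\in\im\big(H_4(Y_i)\to H_4(M)\big)$ with $[C_{ij'}]\cdot\xi_{ij}=\delta_{jj'}$; since the intersection form on the closed manifold $M$ is nondegenerate the dual basis is unique, so these $\xi_{ij}$ must equal $[D_{ij}]$, and being images of $H_4(Y_i)$ they are represented by cycles in $K_i$. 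To find them I would compute the pairing of two $K_i$-supported classes locally: for $a\in H_3(Y_i)$ and $b\in H_4(Y_i)$ the $M$-intersection number of their images equals the Poincaré--Lefschetz pairing $\langle a,\rho(b)\rangle$ on $\overline Y_i$, where $\rho:H_4(\overline Y_i)\to H_4(\overline Y_i,\partial\overline Y_i)$ and the pairing $H_3(\overline Y_i)\times H_4(\overline Y_i,\partial\overline Y_i)\to\R$ is perfect. The key input is the standard duality fact that the annihilator of $\im\rho$ under this pairing is exactly $B_i=\im\big(H_3(\partial\overline Y_i)\to H_3(\overline Y_i)\big)$. Because the $C_{ij}$ were chosen with classes in $S_i$, their span meets $B_i$ only in $0$, so the pairing restricted to $\mathrm{span}\{[C_{ij}]\}\times\im\rho$ has trivial left kernel; by finite-dimensional duality the induced map $\im\rho\to\big(\mathrm{span}\{[C_{ij}]\}\big)^*$ is onto, which furnishes the required $\xi_{ij}$ inside $\im\rho$, i.e. representable in $K_i$.

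I expect the main obstacle to be this last nondegeneracy together with the bookkeeping of neck classes. Intersection numbers of classes supported in $K_i$ are governed by the relative group $H_4(\overline Y_i,\partial\overline Y_i)$ rather than the absolute $H_4(\overline Y_i)$, and a nonzero class of $H_3(Y_i)$ may well pair trivially with all of $\im\big(H_4(Y_i)\to H_4(M)\big)$ --- precisely when it lies in $B_i$. That $B_i$ is harmless is a consequence not of $\delta_i=0$ but of the identity $\beta_3\circ\alpha_3=0$, which carries its image onto the $U$-side; the hypothesis $\delta_i=0$ enters (ii) only through the surjectivity established in (i), without which $H_3(M)$ would contain classes crossing the neck and not representable in $U_0$ or any $K_i$ at all. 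I would close by verifying the local identification of $M$-intersection numbers with the Poincaré--Lefschetz pairing using transverse representatives lying in the interior of $K_i$, where the relevant relative classes are realised by honest cycles.
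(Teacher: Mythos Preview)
Your proposal is correct and follows the same overall route as the paper: Mayer--Vietoris for (i), Poincar\'e--Lefschetz duality on each $\overline Y_i$ for (ii), with disjoint supports killing the cross terms. The one organisational difference worth noting is in (ii). You work with the pairing $H_3(\overline Y_i)\times H_4(\overline Y_i,\partial\overline Y_i)\to\R$, push $H_4(\overline Y_i)$ into the relative group via $\rho$, identify the annihilator of $\im\rho$ as $B_i$, and then deduce surjectivity of $\im\rho\to(\mathrm{span}\{[C_{ij}]\})^*$ from the fact that the $[C_{ij}]$ avoid $B_i$. The paper instead uses the transposed perfect pairing $H_4(\overline Y_i)\times H_3(\overline Y_i,\partial\overline Y_i)\to\R$ and exploits $\delta_i=0$ a second time: it makes $H_3(\overline Y_i)\to H_3(\overline Y_i,\partial\overline Y_i)$ surjective, so the chosen complement $E_i$ of $B_i$ maps \emph{isomorphically} onto $H_3(\overline Y_i,\partial\overline Y_i)$; the $[C_{ij}]$ then form a basis of the relative group and the Lefschetz-dual basis $[D'_{ij}]\in H_4(\overline Y_i)$ exists immediately by perfectness. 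This second use of $\delta_i=0$ replaces your annihilator/surjectivity-by-duality step and makes the bookkeeping for $n_i=\dim E_i=\dim H_4(\overline Y_i)$ automatic, but the two arguments are equivalent.
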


\begin{proof}
    Since $\delta_i = 0$, the maps $H_3(\overline Y_i) \rightarrow H_3(\overline Y_i,\del \overline Y_i)$ are surjective. Thus we deduce that $H_3(Y_i) \simeq H_3(\overline Y_i) \simeq \im(H_3(\del \overline Y_i) \rightarrow H_3(\overline Y_i)) \oplus E_i$, where $E_i \subset H_3(Y_i)$ is isomorphic to $H_3(\overline Y_i,\del \overline Y_i)$. As the maps $H_2(\del \overline Y_i) \rightarrow H_2(\overline Y_i)$ are injective, so is the map $\oplus_i H_2(\Sigma_i) \rightarrow H_2(U) \oplus (\oplus_i H_2(Y_i))$ in the exact sequence \eqref{eq:mayviet}. Thus we obtain an exact sequence
    \begin{equation*}
        \cdots \rightarrow \oplus_i H_3(\Sigma_i) \rightarrow H_3(U) \oplus (\oplus_i H_3(Y_i)) \rightarrow H_3(M) \rightarrow 0 .
    \end{equation*}
    Hence there exists a subspace $E \subset H_3(U)$ such that $E \oplus E_1 \oplus \cdots E_m$ is a complement of the image of $ \oplus_i H_3(\Sigma_i)$ in $H_3(U) \oplus (\oplus_i H_3(Y_i))$, and therefore $H_3(M) \simeq E \oplus E_1 \oplus \cdots \oplus E_m$. It follows that there are homology classes $[C_k] \in E \subset H_3(U)$, $k = 1,\ldots,n$, and $[C_{ij}] \in E_i \subset H_3(Y_i)$, $i = 1,\ldots,m$, $j=1,\ldots,n_i$, which form a basis of $H_3(M)$, where $n = \dim E$ and $n_i = \dim E_i$. Moreover, $K_i$ is a deformation retract of $\overline Y_i$ and hence the classes $[C_{ij}]$ can be represented by cycles supported in $K_i$, and similarly the classes $[C_k]$ can be represented by cycles supported in $U_0$ since $H_3(U_0) \simeq H_3(U)$.

    Now we prove that the homology classes $[D_{ij}] \in H_4(M)$ can be represented by cycles supported in $K_i$. As each $\overline Y_i$ is an oriented compact manifold with boundary, there are nondegenerate intersection pairings $H_4(\overline Y_i) \times H_3(\overline Y_i, \del \overline Y_i) \rightarrow \R$. By construction, the basis $[C_{ij}]$ of $E_i \subset H_3(Y_i)$ induces a basis of $H_3(\overline Y_i,\del \overline Y_i)$, and we denote by $[D^\prime_{ij}] \in H_4(\overline Y_i)$ its dual basis for the intersection product of $\overline Y_i$. We can assume that the cycles $D^\prime_{ij}$ are supported in $K_i$ since $H_4(K_i) \simeq H_4(\overline Y_i)$. As the classes $[C_1],\ldots,[C_n] \in H_3(M)$ are represented by cycles supported in $U_0$ and the classes induced by $[D^\prime_{ij}]$ in $H_4(M)$ are represented by cycles supported in $K_i$, the intersection of $[D^\prime_{ij}]$ and $[C_k]$ is trivial in $M$, and the intersection of $[D^\prime_{ij}]$ and $[C_{i^\prime j^\prime}]$ is $1$ if $(i,j) = (i^\prime, j^\prime)$ and $0$ otherwise. Thus $[D_{ij}] = [D^\prime_{ij}] \in H_4(M)$.
\end{proof}

From the $G_2$-perspective, we typically think of $U$ as the smooth locus of a singular $G_2$-manifold, and in particular $U$ comes equipped with a torsion-free $G_2$-structure $\varphi_0$. The noncompact manifolds $Y_i$ are endowed with families $\varphi_{i,t}$ of torsion-free $G_2$-structures with prescribed asymptotic behaviour, which should match the behaviour of $\varphi_0$ near the $i$-th end of $U$. One then uses some interpolation procedure to construct a family of closed $G_2$-structures $\varphi_t$ on $M$, such that outside of the gluing region $\left. \varphi_t \right|_{U_0} = \left. \varphi_0 \right|_{U_0} $ and $\left. \varphi_t \right|_{K_i} = \left. \varphi_{i,t} \right|_{K_i}$. Much of the subtlety of the construction lies in the choice of interpolation in the gluing region, but for our purpose these details are irrelevant. Provided the torsion of $\varphi_t$ is small enough and there is some control on other geometric quantities (notably the injectivity radius and the norm of the curvature tensor), the general result of Joyce \cite[Th. 11.6.1]{joyce2000compact} insures the existence of a torsion-free $G_2$-structure $\widetilde \varphi_t$ on $M$ such that $[\widetilde \varphi_t] = [\varphi_t] \in H^3(M)$ and $\| \widetilde \varphi_t - \varphi_t \|_{C^0} \leq \epsilon_1$, where $\epsilon_1 > 0$ is some fixed small constant. By taking $\epsilon_1$ small enough we can assume that $\|\widetilde \varphi - \varphi \|_{C^0} \leq \epsilon_1$ implies $2^{-1}g_{\varphi} \leq g_{\widetilde \varphi} \leq 2 g_{\varphi}$ for any $G_2$-structures $\widetilde \varphi, \varphi$. The following theorem gives sufficient conditions for the path $\{ \widetilde \varphi_t \}_{t \in (0,T]}$ to have finite energy and length in the moduli space:

\begin{thm}     \label{thm:finitelength}
    Let $\{\widetilde \varphi_t \}_{t \in (0,T]}$ be a continuous family of torsion-free $G_2$-structures and $\{\varphi_t\}_{t \in (0,T]}$ be a family of closed $G_2$-structures on $M$, such that $[\widetilde \varphi_t] = [\varphi_t] \in H^3(M)$ and $\| \widetilde \varphi_t - \varphi_t \|_{C^0} \leq \epsilon_1$ for all $t \in (0,T]$. We assume that $\varphi_0 = \left. \varphi_t \right|_{U_0}$ is independent of $t$, that each $Y_i$ is endowed with a family of closed $G_2$-structures $\{ \varphi_{i,t} \}_{t \in  (0,T]}$ such that $\left. \varphi_t \right|_{K_i} = \left.  \varphi_{i,t} \right|_{K_i}$ for all $t \in (0,T]$, and that the following assumptions are satisfied:
    \begin{enumerate}[(i)]
        \item $b^1(M) = 0$, and each boundary map $\delta_i : H_3(\overline Y_i, \del \overline Y_i) \rightarrow H_2(\del \overline Y_i)$ is trivial.
        \item For all $i$ and all $[C] \in H_3(Y_i)$, the function $f_{i,[C]}(t) = \int_{[C]} \varphi_{i,t}$ is of class $C^2$ and satisfies: 1) $f^\prime_{i,[C]} : (0,T] \rightarrow \R$ is uniformly bounded, and 2) $f^{\prime \prime}_{i,[C]} \in L^1((0,T])$.
        \item There exists a metric $g_i$ on each $Y_i$ such that $g_{\varphi_{i,t}} \leq g_i$ for all $t \in (0,T]$.
    \end{enumerate}
    Then $\{\widetilde \varphi_t\}_{t \in (0,T]}$ induces a path of class $C^2$ in $\M$ with finite energy and length. 
\end{thm}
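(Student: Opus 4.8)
The plan is to reduce the statement to Corollary \ref{cor:incompbounds} applied to the torsion-free family $\{\widetilde\varphi_t\}$: it suffices to verify its two bounds, with $[\widetilde\varphi_t]=[\varphi_t]$, $\Theta(\widetilde\varphi_t)$ and $\Vol(\widetilde\varphi_t)$ playing the roles of $[\varphi_t]$, $\Theta(\varphi_t)$ and $\Vol(\varphi_t)$. First I would invoke assumption (i) to apply Lemma \ref{lem:support}, obtaining a basis $[C_k],[C_{ij}]$ of $H_3(M)$ with $C_k$ supported in $U_0$ and $C_{ij}$ in $K_i$, together with the dual basis $[D_k],[D_{ij}]$ of $H_4(M)$ whose classes $[D_{ij}]$ are represented by cycles supported in $K_i$. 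Since $\int_{[C_k]}\varphi_t=\int_{C_k}\varphi_0$ is constant in $t$ while $\int_{[C_{ij}]}\varphi_t=f_{i,[C_{ij}]}(t)$ is $C^2$ by assumption (ii), and these integrals determine $[\varphi_t]\in H^3(M)$ through the perfect pairing with $H_3(M)$, the class $[\widetilde\varphi_t]=[\varphi_t]$ traces a $C^2$ path in $H^3(M)$; as $\{\widetilde\varphi_t\}$ is continuous, the remark of Section \ref{section:energy} shows that it induces a $C^2$ path in $\M$, so that Proposition \ref{prop:energy} and Corollary \ref{cor:incompbounds} apply.

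The key step is to rewrite the pairings $\langle \frac{d^j[\varphi_t]}{dt^j}\cup[\Theta(\widetilde\varphi_t)],[M]\rangle$, $j=1,2$, in the adapted basis. Using $\langle\alpha\cup[\Theta],[M]\rangle=\langle\alpha,[M]\cap[\Theta]\rangle$, I would expand the Poincaré dual $[M]\cap[\Theta(\widetilde\varphi_t)]\in H_3(M)$ in the basis $[C_k],[C_{ij}]$ and read off its coefficients by intersecting with the dual basis, identifying them with $\int_{[D_k]}\Theta(\widetilde\varphi_t)$ and $\int_{[D_{ij}]}\Theta(\widetilde\varphi_t)$. Each coefficient of $[C_k]$ is then paired with $\frac{d^j}{dt^j}\int_{[C_k]}\varphi_t$, which vanishes because $C_k\subset U_0$ and $\varphi_t|_{U_0}=\varphi_0$ is constant; hence only the cycles supported in the pieces $K_i$ contribute and
\[
\left\langle \tfrac{d^j[\varphi_t]}{dt^j}\cup[\Theta(\widetilde\varphi_t)],[M]\right\rangle
= \sum_{ij}\left(\int_{[D_{ij}]}\Theta(\widetilde\varphi_t)\right)\,\tfrac{d^j}{dt^j}f_{i,[C_{ij}]}(t),
\]
where $\int_{[C_{ij}]}\varphi_t=f_{i,[C_{ij}]}(t)$ since $C_{ij}\subset K_i$ and $\varphi_t|_{K_i}=\varphi_{i,t}$.

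It remains to estimate the surviving terms. As $\widetilde\varphi_t$ is torsion-free, hence co-closed, $\Theta(\widetilde\varphi_t)$ is a calibration, so \eqref{eq:thetaest} applied to the fixed cycle $D_{ij}\subset K_i$ gives $|\int_{[D_{ij}]}\Theta(\widetilde\varphi_t)|\le \Vol(D_{ij},g_{\widetilde\varphi_t})$. The bound $\|\widetilde\varphi_t-\varphi_t\|_{C^0}\le\epsilon_1$ gives $g_{\widetilde\varphi_t}\le 2g_{\varphi_t}$, and on $K_i$ we have $g_{\varphi_t}=g_{\varphi_{i,t}}\le g_i$ by assumption (iii); monotonicity of volume under enlargement of the metric then yields $\Vol(D_{ij},g_{\widetilde\varphi_t})\le 2^2\Vol(D_{ij},g_i)$, a constant independent of $t$. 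Dually, $\Vol(\widetilde\varphi_t)\ge\Vol(U_0,g_{\widetilde\varphi_t})\ge 2^{-7/2}\Vol(U_0,g_{\varphi_0})=:v_0>0$, using $g_{\widetilde\varphi_t}\ge\frac12 g_{\varphi_0}$ on the fixed region $U_0$. Feeding the uniform bound on $f'_{i,[C_{ij}]}$ from assumption (ii.1) into the displayed identity for $j=1$ gives a constant majorant, which is $\le C\Vol(\widetilde\varphi_t)$ after dividing by $v_0$; feeding instead the $L^1$ bound on $f''_{i,[C_{ij}]}$ from assumption (ii.2) for $j=2$ produces an integrable majorant $A(t)$. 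Both hypotheses of Corollary \ref{cor:incompbounds} are thus verified, giving finite energy and length. I expect the main difficulty to be organizational rather than analytical: one must carefully keep track of the two coexisting families, using $[\widetilde\varphi_t]=[\varphi_t]$ (computed through $\varphi_{i,t}$ on the $K_i$) for the homological pairings while simultaneously using the genuine torsion-free form $\widetilde\varphi_t$ for the calibration estimate and the volume lower bound, so that every quantity entering Corollary \ref{cor:incompbounds} refers consistently to the torsion-free family.
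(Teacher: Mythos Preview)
Your proposal is correct and follows essentially the same approach as the paper: invoke Lemma~\ref{lem:support} via assumption (i), express $[\varphi_t]$ and $[\Theta(\widetilde\varphi_t)]$ in the resulting adapted (dual) bases so that only the $K_i$-supported cycles contribute to the pairings, bound $|\int_{[D_{ij}]}\Theta(\widetilde\varphi_t)|$ uniformly via the calibration inequality together with $g_{\widetilde\varphi_t}\le 2g_{\varphi_{i,t}}\le 2g_i$ on $K_i$, bound $\Vol(\widetilde\varphi_t)$ below using the fixed region $U_0$, and conclude with Corollary~\ref{cor:incompbounds}. The only cosmetic difference is that you phrase the cup-product computation through the Poincar\'e dual $[M]\cap[\Theta(\widetilde\varphi_t)]$ whereas the paper writes both classes directly in the dual bases $[C_k^*],[C_{ij}^*]$ and $[D_k^*],[D_{ij}^*]$; the resulting identity and estimates are identical.
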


\begin{proof}
    Let us consider the bases $[C_k],[C_{ij}] \in H_3(M)$ and $[D_k],[D_{ij}] \in H_4(M)$ provided by Lemma \ref{lem:support}, and denote by $[C_k^*],[C_{ij}^*] \in H^3(M)$ and $[D_k^*],[D_{ij}^*] \in H^4(M)$ their respective dual bases. The cohomology class $[\widetilde \varphi_t] \in H^3(M)$ reads
    \begin{equation*}
        [\widetilde \varphi_t] = [\varphi_t] = \sum_{k=1}^n f_k(t) [C_k^*] + \sum_{i=1}^m \sum_{j=1}^{n_i} f_{ij}(t) [C_{ij}^*]
    \end{equation*}
    where 
    \begin{equation*}
        f_k(t) = \int_{[C_k]} \varphi_t, ~~ \text{and} ~~ f_{ij}(t) = \int_{[C_{ij}]} \varphi_t.
    \end{equation*}
    Since the restriction $\left. \varphi_t \right|_{U_0}$ is constant and the homology classes $[C_1],\ldots,[C_n]$ are represented by cycles supported in $U_0$, the functions $f_1,\ldots,f_n$ are constant. Moreover, as the homology classes $[C_{ij}]$ are represented by cycles supported in $K_i$ and $\left. \varphi_t \right|_{K_i} = \left.  \varphi_{i,t} \right|_{K_i}$, we deduce that $f_{ij} = f_{i,[C_{ij}]}$, and thus $f_{ij}$ is of class $C^2$, $f^\prime_{ij}$ is uniformly bounded and $f^{\prime\prime}_{ij}$ is $L^1$. This implies in particular that $\{ \widetilde \varphi_t \}_{t \in (0,T]}$ induces a path of class $C^2$ in $\M$.

    Similarly, the cohomology class of the $4$-form can be written
    \begin{equation*}
        [\Theta(\widetilde \varphi_t)] = \sum_{k=1}^n g_k(t) [D_k^*] + \sum_{i=1}^m \sum_{j=1}^{n_i} g_{ij}(t) [D_{ij}^*]
    \end{equation*}
    where
    \begin{equation*}
        g_k(t) = \int_{[D_k]} \Theta(\widetilde \varphi_t) ~~ \text{and} ~~ g_{ij}(t) = \int_{[C_{ij}]} \Theta(\widetilde \varphi_t) .
    \end{equation*}
    As the bases $[C_k^*],[C^*_{ij}] \in H^3(M)$ and $[D^*_k],[D^*_{ij}] \in H^4(M)$ are dual for the cup-product and the functions $f_k$ are constant, it follows that:
    \begin{align*}
        \left \langle \frac{d [\widetilde \varphi_t]}{dt} \cup [\Theta( \widetilde \varphi_t)] , [M] \right \rangle & = \sum_{i=1}^m \sum_{j=1}^{n_i} g_{ij}(t) f^\prime_{ij}(t), ~ \text{and} \\
        \left \langle \frac{d^2 [\widetilde \varphi_t]}{dt^2} \cup [\Theta(\widetilde \varphi_t)] , [M] \right \rangle & = \sum_{i=1}^m \sum_{j=1}^{n_i} g_{ij}(t) f^{\prime\prime}_{ij}(t) .
    \end{align*}
    We remark that $\Vol(\varphi_t) \geq \int_{U_0} \varphi_0 > 0$ for all $t \in (0,T]$, and as $\| \widetilde \varphi_t - \varphi_t \|_{C^0} \leq \epsilon_1$ it follows that $\Vol(\widetilde \varphi_t)$ is uniformly bounded below away from zero. Moreover the functions $f^\prime_{ij}$ are uniformly bounded and the functions $f^{\prime\prime}_{ij}$ are $L^1$, and thus it is enough to show that the functions $g_{ij}$ are uniformly bounded to apply Corollary \ref{cor:incompbounds}. 
    
    Since $\widetilde \varphi_t$ is co-closed the $4$-form $\Theta(\widetilde \varphi_t)$ is a calibration, and by \eqref{eq:thetaest} we have
    \begin{equation*}
        |g_{ij}(t)| =\left| \int_{[D_{ij}]} \Theta(\widetilde \varphi_t) \right| \leq \Vol(D_{ij},g_{\widetilde \varphi_t}).
    \end{equation*}
    As $\|\widetilde \varphi_t - \varphi_t \|_{C^0} \leq \epsilon_1$ we have $g_{\widetilde \varphi_t} \leq 2 g_{\varphi_t}$, and thus $\left. g_{\widetilde \varphi_t} \right|_{K_i} \leq 2 \left. g_{\varphi_{i,t}} \right|_{K_i} \leq 2 \left. g_i \right|_{K_i}$. By Lemma \ref{lem:support}, we can assume that the $4$-cycle $D_{ij}$ is supported in $K_i$ and thus
    \begin{equation*}
        |g_{ij}(t)| \leq \Vol(D_{ij}, g_{\widetilde \varphi_t}) \leq 4 \Vol(D_{ij},g_i) .
    \end{equation*}
    Hence the functions $g_{ij}$ are uniformly bounded. Therefore the path $\{ \widetilde \varphi_t \}_{t \in (0,T]}$ satisfies the assumptions of Corollary \ref{cor:incompbounds} and the theorem follows.
\end{proof}

In the case of the generalised Kummer construction \cite{joyce1996compacti,joyce1996compactii}, $U$ is the complement of the singular set of a $G_2$-orbifold $T^7 / \Gamma$, where $\Gamma$ is a finite subgroup of $G_2$, and thus $U$ carries a flat $G_2$-structure $\varphi_0$. Each connected component of the singular set of $T^7/\Gamma$ is assumed to have a neighbourhood isometric to either one of
\begin{enumerate}
    \item $(T^3 \times B^4 / G_i) / F_i$, where $T^3$ is a flat $3$-torus, $B^4 \subset \C^2$ a Euclidean ball, $G_i$ a finite subgroup of $SU(2)$ acting freely on $\C^2$ except at the origin, and $F_i$ a group of isometries of $T^3 \times \C^2 / G_i$ acting freely on $T^3$; or
    \item $(S^1 \times B^6/G_i)/F_i$, where $S^1$ is a flat circle, $B^6 \subset \C^3$ a Euclidean ball, $G_i$ a finite subgroup of $SU(3)$ acting freely on $\C^3$ except at the origin, and $F_i$ a group of isometries of $S^1 \times \C^3 / G_i$ acting freely on $S^1$.
\end{enumerate}

The noncompact manifold $Y_i$ used to resolve a singularity of the first type is $Y_i = (T^3 \times X_i)/F_i$ where $X_i$ is an Asymptotically Locally Euclidean (ALE) space with holonomy $SU(2)$ asymptotic to $\C^2 / G_i$, equipped with an $F_i$-action such that $(T^3 \times X_i)/F_i$ is asymptotic to $(T^3 \times \C^2 / G_i) / F_i$. It has boundary $\Sigma_i = (T^3 \times S^3/G_i)/F_i$, and in particular $H_2(\Sigma_i) \simeq H_2(T^3)^{F_i}$. In addition, it follows from the K\"unneth theorem -- taking into account that $H_1(X_i) = 0$ since hyperk\"{a}hler ALE spaces are simply connected -- that $H_2(Y_i) \simeq H_2(T^3)^{F_i} \oplus H_2(X_i)^{F_i}$. Thus the map $H_2(\del \overline Y_i) \rightarrow H_2(\overline Y_i)$ is injective, which implies that the boundary map $\delta_i : H_3(\overline Y_i, \del \overline Y_i) \rightarrow H_2(\del \overline Y_i)$ is trivial. The manifold $Y_i$ is endowed with a family of torsion-free $G_2$-structures lifting to $T^3 \times X_i$ as
\begin{equation*}
    \varphi_{i,t} = \theta_1 \wedge \theta_2 \wedge \theta_3 + t^2 (\theta_1 \wedge \omega_{i,1} + \theta_2 \wedge \omega_{i,2} + \theta_3 \wedge \omega_{i,3})
\end{equation*}
where $(\theta_1,\theta_2,\theta_3)$ is a basis of harmonic $1$-forms on $T^3$ and $(\omega_{i,1},\omega_{i,2},\omega_{i,3})$ is an ALE hyperk\"{a}hler triple on $X_i$. It follows that for any homology class $[C] \in H_3(Y_i)$ we have $f_{i,[C]}(t) = a_{i,[C]} + b_{i,[C]}t^2$ for some constants $a_{i,[C]}, b_{i,[C]} \in \R$. Moreover, the associated metric on $T^3 \times X_i$ is a product $g_{T^3} + t^2 g_{X_i}$, where $g_{T^3}$ is a flat metric on $T^3$ and $g_{X_i}$ is an ALE metric on $X_i$. In particular $g_{\varphi_{i,t}} \leq g_{\varphi_{i,1}}$ for all $t \in (0,1]$. 

For the second type of singularities, the manifold $Y_i$ is of the form $Y_i = (S^1 \times Z_i) / F_i$ where $Z_i$ is an ALE manifold with holonomy $SU(3)$ asymptotic to $\C^3 / G_i$, equipped with an $F_i$-action such that $(S^1 \times Z_i)/F_i$ is asymptotic to $(S^1 \times \C^3/G_i)/F_i$. Here the boundary of $\overline Y_i$ is $\Sigma_i = (S^1 \times S^5/G_i)/F_i$ so that $H_2(\Sigma_i) = 0$, which in particular implies that the boundary map $\delta_i : H_3(\overline Y_i, \del \overline Y_i) \rightarrow H_2(\del \overline Y_i)$ is trivial. There is a family of torsion-free $G_2$-structures on $Y_i$ which lifts to $S^1 \times Z_i$ as
\begin{equation*}
    \varphi_{i,t} = t^2 \theta \wedge \omega_i + t^3 \re \Omega_i
\end{equation*}
on $Y_i$, where $\theta$ is a nontrivial harmonic form on $S^1$, $\omega_i$ a K\"{a}hler form and $\Omega_i$ a holomorphic volume form on $Z_i$ such that $(\omega_i,\Omega_i)$ is an ALE torsion-free $SU(3)$-structure. Thus for any homology class $[C] \in H_3(Y_i)$ there are constants $a_{i,[C]}$ and $b_{i,[C]}$ such that $f_{i,[C]}(t) = a_{i,[C]} t^2 + b_{i,[C]} t^3$. As in the previous case, the metric $g_{\varphi_{i,t}}$ lifts to the product $g_{S^1} + t^2 g_{Z_i}$ on $S^1 \times Z_i$ and hence $g_{\varphi_{i,t}} \leq g_{\varphi_{i,1}}$ for $t \in (0,1]$.

When the gluing data of the generalised Kummer construction is chosen so that $b^1(M) = 0$, all of the assumptions of our theorem are satisfied and thus the degeneration to $T^7/\Gamma$ corresponds to a finite-distance limit in the moduli space.

\begin{cor}     \label{cor:incomplete}
    The generalised Kummer $G_2$-manifolds constructed in \cite{joyce1996compacti,joyce1996compactii} have incomplete moduli spaces.
\end{cor}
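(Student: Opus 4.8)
The plan is to verify that the two families of model torsion-free $G_2$-structures $\{\varphi_{i,t}\}$ attached to the singularities of $T^7/\Gamma$ satisfy all the hypotheses of Theorem \ref{thm:finitelength}, and then to argue that the resulting finite-length path in $\M$ has no limit point in $\M$. For the hypotheses: assumption (i) holds because the gluing data is chosen with $b^1(M)=0$ and the computations above show that $H_2(\del\overline Y_i)\to H_2(\overline Y_i)$ is injective in both cases, whence each $\delta_i=0$. For (ii), the explicit expressions give $f_{i,[C]}(t)=a_{i,[C]}+b_{i,[C]}t^2$ (first type) or $f_{i,[C]}(t)=a_{i,[C]}t^2+b_{i,[C]}t^3$ (second type); in either case $f_{i,[C]}$ is polynomial, hence $C^2$ on $(0,1]$ with $f'_{i,[C]}$ bounded and $f''_{i,[C]}$ bounded, so in particular $f''_{i,[C]}\in L^1((0,1])$. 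For (iii) one takes $g_i=g_{\varphi_{i,1}}$, since the lifted metric $g_{\varphi_{i,t}}$ equals $g_{T^3}+t^2g_{X_i}$ (resp. $g_{S^1}+t^2g_{Z_i}$) and is therefore nonincreasing in $t$ on $(0,1]$. Theorem \ref{thm:finitelength} then produces a continuous family $\{\widetilde\varphi_t\}_{t\in(0,1]}$ of torsion-free $G_2$-structures inducing a $C^2$ path of finite energy and finite length in $\M$.

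It remains to see that this path degenerates, that is, that the associated Cauchy sequence has no limit in $\M$. First I would identify the limit of the cohomology classes. In the basis of Lemma \ref{lem:support} the exceptional periods $f_{ij}(t)=\int_{[C_{ij}]}\varphi_{i,t}$ tend to $0$ as $t\to0$: for the second type every period already carries a factor $t^2$, while for the first type the constant term $a_{ij}=\int_{[C_{ij}]}\theta_1\wedge\theta_2\wedge\theta_3$ vanishes because the $E_i$-classes carry no component along the $T^3$-fibre class (which lies in the image of $H_3(\del\overline Y_i)$), leaving $f_{ij}(t)=b_{ij}t^2$. The classes $[C_k]$ supported in $U_0$ meanwhile contribute the constant periods $f_k$, so $[\widetilde\varphi_t]\to c_0:=\sum_k f_k[C_k^*]$ in $H^3(M)$, the pullback to $M$ of the flat orbifold $G_2$-structure on $T^7/\Gamma$. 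Since the period map $\M\to H^3(M)$ is a local diffeomorphism, hence continuous, a limit $\varphi_*\in\M$ of the path would necessarily satisfy $[\varphi_*]=c_0$; so it suffices to rule out the existence of a torsion-free $G_2$-structure on $M$ in the class $c_0$, after which the finite-length Cauchy sequence $\{\widetilde\varphi_{t_n}\}$ ($t_n\to0$) has no limit and $(\M,\G)$ is incomplete by Hopf--Rinow.

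The hard part is precisely this last step, which encodes the singular nature of the limit. The plan is to exploit the dual exceptional $4$-cycles $D_{ij}$ of Lemma \ref{lem:support}, which are supported in $K_i$: as $t\to0$ their $g_{\widetilde\varphi_t}$-volume tends to $0$, because $g_{\varphi_{i,t}}$ collapses the ALE factor at rate $t^2$ and $\tfrac12 g_{\varphi_t}\le g_{\widetilde\varphi_t}\le 2g_{\varphi_t}$, so by the calibration estimate \eqref{eq:thetaest} the dual periods $g_{ij}(t)=\int_{[D_{ij}]}\Theta(\widetilde\varphi_t)$ also tend to $0$. A putative limit $\varphi_*$ would therefore satisfy $\int_{[C_{ij}]}\varphi_*=0$ and $\int_{[D_{ij}]}\Theta(\varphi_*)=0$ simultaneously for all $(i,j)$, so that both calibrations degenerate along the dual pair of exceptional cycles; equivalently, $(M,g_{\widetilde\varphi_t})$ collapses in the Gromov--Hausdorff sense to the singular orbifold $T^7/\Gamma$, which admits no smooth torsion-free $G_2$-structure. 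Hence $c_0$ lies outside the image of the period map, as desired. I expect the only genuine subtlety to be making this non-existence rigorous, i.e. certifying that $c_0$ is a boundary class of the period image rather than an interior one — the $G_2$ analogue of a Kähler class lying on the boundary of the Kähler cone — for which the uniform collapse of the exceptional cycle volumes, controlled via assumption (iii), is the essential input.
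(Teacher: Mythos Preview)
Your verification of hypotheses (i)--(iii) of Theorem~\ref{thm:finitelength} is correct and is exactly what the paper does: the paper's proof of the corollary consists entirely of the discussion preceding it, checking $\delta_i=0$ via the injectivity of $H_2(\del\overline Y_i)\to H_2(\overline Y_i)$, observing that the periods $f_{i,[C]}$ are polynomial in $t$, and noting $g_{\varphi_{i,t}}\le g_{\varphi_{i,1}}$ for $t\in(0,1]$. One small wording issue: Theorem~\ref{thm:finitelength} does not \emph{produce} the family $\{\widetilde\varphi_t\}$; that family is supplied by Joyce's perturbation theorem, and Theorem~\ref{thm:finitelength} shows it has finite length.

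Where you diverge from the paper is in the second half. The paper does not attempt to prove that the limit lies outside $\M$; it simply takes this as part of what ``degeneration to $T^7/\Gamma$'' means, invoking (in the later discussion) that the metrics $g_{\widetilde\varphi_t}$ Gromov--Hausdorff converge to the flat orbifold, which is not diffeomorphic to $M$. Your attempt to pin this down via the limiting periods is reasonable and your computation that the exceptional $3$- and $4$-periods vanish is correct, but as you yourself flag, the conclusion that $c_0$ lies outside the image of the period map does not follow formally from those vanishing conditions alone: having $\int_{[C_{ij}]}\varphi_*=\int_{[D_{ij}]}\Theta(\varphi_*)=0$ is not by itself a contradiction for a torsion-free $G_2$-structure. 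The robust argument is the one you mention in passing --- GH-convergence to a genuinely singular space --- and that is what the paper relies on, without spelling out the (standard but not entirely trivial) step that convergence in $\M$ would force the GH limit to be smooth.
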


    \section{The volume of cycles and infinite-distance limits}     \label{section:infinite}

An interesting special case in the generalised Kummer construction is when all the singularities of $T^7/\Gamma$ are resolved by gluing quotients of products of a $3$-torus and a hyperk\"{a}hler ALE space. In that case,  the cohomology class $[\varphi_t]$ is an affine function of $t^2$. This leads us to consider the general situation of a path of torsion-free $G_2$-structures $\{\varphi_t\}_{t \in (0,T]}$ whose cohomology classes form a line segment in $H^3(M)$.

To further motivate this question, let us compare with the case of K\"{a}hler cones, which we mentioned in introduction. Let $X$ be a compact K\"{a}hler manifold of (complex) dimension $n \geq 2$ and let $\K$ be the cone of K\"{a}hler classes on $X$. This is an open convex cone of the space of real $(1,1)$-forms. Let us denote by $\overline \K$ the closure of $\K$ in $H^{1,1}(X;\R)$, and $\del \K = \overline \K \backslash \K$. The K\"{a}hler cone has a natural metric, which comes from the Hessian potential $- \log \int_X \omega^n / n! = - \log \Vol(\omega)$. A natural question to ask is which classes $\alpha \in \del \K$ correspond to infinite-distance limits for this metric. The answer is simple: either $\int_X \alpha^n = 0$, in which case this is an infinite-distance limit; or $\int_X \alpha^n > 0$ and the limit is at finite distance \cite{magnusson2012metriques}. Let us give a brief sketch of the argument. If $\int_X \alpha^n = 0$, then any path $\omega_t \in \K$ converging to $\alpha \in H^{1,1}(X;\R)$ has $\int_X \omega_t^n \rightarrow 0$, and such limit is at infinite distance. Otherwise $\int_X \alpha^n > 0$, and if $\omega \in \K$ we can consider the path $\omega_t = \alpha + t\omega$. For $t \in (0,\infty)$ we have $\omega_t \in \K$, and moreover $\frac{d \omega_t}{dt} = \omega$ and $\frac{d^2 \omega_t}{dt^2} = 0$. Arguing along the same lines as in the proof of Proposition \ref{prop:energy}, one can see that for any $\tau \in (0,1]$ the energy of the path $\{ \omega_t \}_{t \in (\tau,1]}$ is
\begin{equation*}
    E_\tau^1(\omega_t) = \frac{\int_X \omega \wedge (\alpha + \tau \omega)^{n-1}}{(n-1)! \Vol(\omega_\tau)} - \frac{\int_X \omega \wedge (\alpha + \omega)^{n-1}}{(n-1)! \Vol(\omega_1)} \cdot
\end{equation*}
Since $\int_X \alpha^n > 0$, $\Vol(\omega_\tau)$ is uniformly bounded below away from zero as $\tau \rightarrow 0$, and as the numerator of the first term is a polynomial function of $\tau$ it remains bounded as $\tau \rightarrow 0$. It follows that the path $\{ \omega_t \}_{t \in (0,1]}$ has finite energy and length in $\K$, and thus $\alpha$ is a finite-distance limit.

Let us now go back to the $G_2$-case, and consider family of torsion-free $G_2$-structures $\{\varphi_t\}_{t \in (0,T]}$ on a compact $7$-manifold $M$ with $b^1(M) = 0$, inducing a smooth path in the moduli space $\M$. We assume that $[\varphi_t] = [\varphi_0] + t [\dot \varphi ]$, where $[\varphi_0],[\dot \varphi] \in H^3(M)$ are fixed cohomology classes. Heuristically, we want to think of $[\varphi_0]$ as lying on the boundary of the image of $\M$ in $H^3(M)$. As $\ddot \varphi_t = 0$ we have $\frac{d^2 \F(\varphi_t)}{dt^2} = D^2_{\varphi_t} \F(\dot \varphi_t, \dot \varphi_t) \geq 0$, and thus the function $\F(\varphi_t) = - 3 \log \Vol(\varphi_t)$ is convex on $(0,T]$. Hence $\Vol(\varphi_t)$ is bounded above, and either $\Vol(\varphi_t) \rightarrow 0$ as $t \rightarrow 0$ or it is uniformly bounded below away from zero. If the volume shrinks to zero, then the limit of $\varphi_t$ as $t \rightarrow 0$ is at infinite distance in the moduli space. The remaining interesting case is when $\Vol(\varphi_t)$ is uniformly bounded below away from zero. As $\frac{d^2[\varphi_t]}{dt^2} = 0$, the energy takes a particularly simple form:
\begin{equation}    \label{eq:energyaff}
    E_\tau^T(\varphi_t) = \frac{\langle [\dot \varphi] \cup [\Theta(\varphi_\tau)] , [M] \rangle}{\Vol(\varphi_\tau)} - \frac{\langle [\dot \varphi] \cup [\Theta(\varphi_T)] , [M] \rangle}{\Vol(\varphi_T)} \cdot
\end{equation}
So far the situation is very similar to that of K\"{a}hler cones, except for one crucial difference, which is that $[\Theta(\varphi_\tau)]$ has no reason to be a merely polynomial function of $\tau$. Thus we do not know whether the numerator of the first term remains bounded as $\tau \rightarrow 0$, and it may be that the limit of $\varphi_t$ as $t \rightarrow 0$ is at infinite distance in the moduli space even though the volume is bounded below. Such a phenomenon, if it occurs, would be a feature of $G_2$-moduli spaces that has no analogy in the geometry of K\"{a}hler cones.

To gain more insight into the geometry of such a situation, let us denote by $\PD[\dot \varphi] \in H_4(M)$ the Poincar\'{e}-dual class of $[\dot \varphi]$. Then we can bound the numerators in \eqref{eq:energyaff} by
\begin{equation}
    |\langle [\dot \varphi] \cup [\Theta(\varphi_t)] , [M] \rangle | = \left| \int_{\PD [\dot \varphi]} \Theta(\varphi_t) \right| \leq \Vol(\PD[\dot \varphi], g_{\varphi_t})
\end{equation}
where we define $\Vol(\PD[\dot \varphi], g_{\varphi_t})$ as the infimum of $\Vol(D,g_{\varphi_t})$ taken over the $4$-cycles $D$ representing $\PD[\dot \varphi] \in H_4(M)$. If $\Vol(\PD[\dot \varphi],g_{\varphi_t})$ is bounded, then we easily deduce that the energy of $\{\varphi_t\}_{t \in (0,T]}$ is finite. As the energy $E_{\tau}^T(\varphi_t)$ is a decreasing function of $\tau$, it is in fact enough to assume that $\Vol(\PD[\dot \varphi],g_{\varphi_{t_i}})$ is uniformly bounded for some sequence $t_i \rightarrow 0$. By contrapositive, we obtain the following condition:

\begin{prop}    \label{prop:infinite}
    Let $\{\varphi_t\}_{t \in (0,T]}$ be a family of torsion-free $G_2$-structures on $M$ inducing a smooth path in $\M$, and suppose that the cohomology class $\frac{d [\varphi_t]}{dt} = [\dot \varphi]$ is constant in $H^3(M)$ and that the volume of $(M,\varphi_t)$ is uniformly bounded below away from zero. If the limit of $\varphi_t$ as $t \rightarrow 0$ is at infinite distance in the moduli space, then
    \begin{equation*}
        \Vol(\PD[\dot \varphi], g_{\varphi_t}) \rightarrow \infty ~~ \text{as} ~~ t \rightarrow 0.
    \end{equation*}
\end{prop}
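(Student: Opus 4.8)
The plan is to prove the contrapositive: I will assume that $\Vol(\PD[\dot\varphi], g_{\varphi_t})$ does \emph{not} diverge to $+\infty$ as $t \to 0$ and deduce that the limit of $\varphi_t$ lies at finite distance in $\M$. Negating the conclusion produces a constant $K > 0$ and a sequence $t_i \to 0$ with $\Vol(\PD[\dot\varphi], g_{\varphi_{t_i}}) \le K$ for every $i$. The whole argument then hinges on upgrading this estimate, which holds only along the sequence $(t_i)$, into a bound on the \emph{total} energy $E_0^T(\varphi_t)$; the affineness of $[\varphi_t]$ in $t$ and the resulting simplified energy \eqref{eq:energyaff} are what make this upgrade possible.

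First I would evaluate \eqref{eq:energyaff} at $\tau = t_i$. Its second term, at the fixed endpoint $T$, is a constant independent of $i$, so only the first term requires control. By Poincar\'e duality $\langle [\dot\varphi] \cup [\Theta(\varphi_{t_i})], [M]\rangle = \int_{\PD[\dot\varphi]} \Theta(\varphi_{t_i})$, and since $\varphi_{t_i}$ is torsion-free, hence co-closed, the $4$-form $\Theta(\varphi_{t_i})$ is a calibration; applying the estimate \eqref{eq:thetaest} to any $4$-cycle representing $\PD[\dot\varphi]$ and taking the infimum over such representatives yields $|\langle [\dot\varphi] \cup [\Theta(\varphi_{t_i})], [M]\rangle| \le \Vol(\PD[\dot\varphi], g_{\varphi_{t_i}}) \le K$. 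Dividing by $\Vol(\varphi_{t_i})$, which stays above a fixed $v_0 > 0$ by hypothesis, shows that $E_{t_i}^T(\varphi_t)$ is bounded uniformly in $i$.

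The step I expect to be the crux is passing from the sequence to all of $(0,T]$. Since $[\varphi_t]$ is affine we have $\ddot\varphi_t = 0$, so as in the proof of Proposition \ref{prop:energy} the energy equals $E_\tau^T(\varphi_t) = \int_\tau^T \G_{\varphi_t}(\dot\varphi_t, \dot\varphi_t)\, dt$, an integral of a nonnegative integrand. Hence $\tau \mapsto E_\tau^T(\varphi_t)$ is nonincreasing and $E_0^T(\varphi_t) = \lim_{\tau \to 0} E_\tau^T(\varphi_t) = \sup_{\tau \in (0,T]} E_\tau^T(\varphi_t)$. For any $\tau$, choosing $t_i < \tau$ and using monotonicity gives $E_\tau^T(\varphi_t) \le E_{t_i}^T(\varphi_t)$, so the supremum is finite and thus $E_0^T(\varphi_t) < \infty$. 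The Cauchy--Schwarz inequality $L_0^T(\varphi_t)^2 \le T\, E_0^T(\varphi_t)$ recorded before Corollary \ref{cor:incompbounds} then bounds the length, so the limit is at finite distance, contradicting the hypothesis and proving the proposition.

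The genuine obstacle here is conceptual rather than analytic, and was flagged in the discussion preceding the statement: unlike for the K\"ahler cone, $[\Theta(\varphi_\tau)]$ has no reason to depend polynomially on $\tau$, so the numerator of \eqref{eq:energyaff} cannot be controlled by a direct algebraic computation. Its control must instead come from the geometric calibration inequality \eqref{eq:thetaest}, and it is precisely the monotonicity of $E_\tau^T$ in $\tau$ that allows a bound along a single sequence $t_i \to 0$ to replace a bound valid for all $t$ --- a simplification without which the hypothesis would be considerably harder to exploit.
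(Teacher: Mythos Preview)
Your proposal is correct and follows exactly the paper's own argument: contrapositive, the calibration bound \eqref{eq:thetaest} applied via Poincar\'e duality to control the numerator in \eqref{eq:energyaff} along a sequence $t_i \to 0$, monotonicity of $\tau \mapsto E_\tau^T$ to pass to the full interval, and Cauchy--Schwarz to conclude finite length. There is nothing to add.
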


Heuristically, this result says that if there is a point of the boundary of $\M$ that can be approached by a path of torsion-free $G_2$-structures whose cohomology classes form a line segment in $H^3(M)$, then there is the following trichotomy. Either the volume is shrinking to zero along the path, in which case the limit is at infinite distance; or the volume is bounded below away from zero and the length of the path is infinite, and in that case there must be a homology class in degree $4$ whose volume is going to infinity; or this is a finite-distance limit. Some of the generalised Kummer $G_2$-manifolds provide examples of the third case, and the first case occurs for instance by scaling any torsion-free $G_2$-structure. However, we do not know if the second case can happen, and as previously noted this would have to be a phenomenon specific to $G_2$-moduli spaces, by contrast with what can occur at the boundary of K\"{a}hler cones.

Proposition \ref{prop:infinite} can be generalised to families of torsion-free $G_2$-structures whose cohomology classes define a path in $H^3(M)$ which is regular enough. For instance, if we assume that the path of cohomology classes is $C^2$, has bounded first derivative and integrable second derivative, and that the volume of $M$ is bounded below away from zero, it is straightforward to deduce from Corollary \ref{cor:incompbounds} and the bound \eqref{eq:thetaest} that if all homology classes in degree $4$ have bounded volume then the energy of the path is finite. This could also be used to deduce the incompleteness of the moduli spaces of the generalised Kummer $G_2$-manifolds, with a criterion that is easier to state than the assumptions of Theorem \ref{thm:finitelength}, but which actually takes more effort to check in practice since one would have to consider all homology classes of degree $4$. This also indicates that $G_2$-manifolds constructed by resolution of isolated conical singularities \cite{karigiannis2009desingularization} or the Joyce--Karigiannis construction \cite{joyce2021new} would be expected to have incomplete moduli spaces. In these cases it is clear that the total volume is bounded below away from zero and that all homology classes have bounded volume, and the only assumption that would be left to check is the one concerning the derivatives of the path of cohomology classes.

We finish with a few remarks and open questions. Since the moduli space is generally not complete, we can define its metric completion $\overline{\M}$ and the (finite-distance) boundary $\del \M = \overline \M \backslash \M$ may be nonempty. It would be interesting to know if the points of the boundary have a geometric interpretation. For the generalised Kummer $G_2$-manifolds, we considered paths of torsion-free $G_2$-structures such that the associated metrics are converging to a flat $G_2$-orbifold in the Gromov--Hausdorff topology. Hence a first question to ask is whether a path of torsion-free $G_2$-structures which has finite length in the moduli space always converges to a compact and possibly singular $G_2$-manifold in the Gromov--Hausdorff sense. Another related problem would be to give sufficient conditions for the limit of a path (or a sequence) in the moduli space to be at infinite distance. Since this is always the case when the volume diverges to $0$ or $\infty$, we need to fix the volume for this question to be interesting. In the case of twisted connected sums for instance, the volume grows linearly with the length of the neck region and therefore this represents an infinite-distance limit in the moduli space -- even though we could deduce from Proposition \ref{prop:energy} that the energy of the obvious path is bounded, which would be consistent with the speed of the path being proportional to the inverse of the neck length. However, if we normalise the volume it is no longer clear that the distance along the path is unbounded. The main challenge is that in order to find a lower bound on the distance between two points in the moduli space, we need to control the length of \emph{all} paths connecting them, whereas the length of one particular path is enough to give an upper bound. The author hopes to come back to these questions in future research.

    \section*{Acknowledgements}

This work is supported by scholarships from the Clarendon Fund and the Saven European Programme.

\small

\bibliographystyle{abbrv}
\bibliography{biblio_incompleteness}

\begin{thebibliography}{10}

\bibitem{alekseevsky1968riemannian}
D.~V. Alekseevsky.
\newblock Riemannian spaces with exceptional holonomy.
\newblock {\em Funct. Anal. Appl.}, 2:97--105, 1968.

\bibitem{berger1955groupes}
M.~Berger.
\newblock Sur les groupes d'holonomie homog{\`e}nes de vari{\'e}t{\'e}s {\`a} connexion affine et des vari{\'e}t{\'e}s riemanniennes.
\newblock {\em Bull. Soc. Math. France}, 83:279--330, 1955.
\newblock \href{https://doi.org/10.24033/bsmf.1464}{https://doi.org/10.24033/bsmf.1464}.

\bibitem{bonan1966sur}
E.~Bonan.
\newblock Sur les vari\'{e}t\'{e}s riemanniennes \`{a} groupe d'holonomie {$G_2$} ou {$Spin(7)$}.
\newblock In {\em C. R. Acad. Sc. Paris}, volume 262, 1966.
\newblock \href{https://gallica.bnf.fr/ark:/12148/bpt6k6236863n/f141.item}{https://gallica.bnf.fr/ark:/12148/bpt6k6236863n/f141.item}.

\bibitem{bryant1987metrics}
R.~L. Bryant.
\newblock Metrics with exceptional holonomy.
\newblock {\em Ann. of Math. (2)}, 126(3):525--576, 1987.
\newblock \href{https://doi.org/10.2307/1971360}{https://doi.org/10.2307/1971360}.

\bibitem{bryant1989construction}
R.~L. Bryant and S.~M. Salamon.
\newblock On the construction of some complete metrics with exceptional holonomy.
\newblock {\em Duke Math. J.}, 58(3):829--850, 1989.
\newblock \href{https://doi.org/10.1215/S0012-7094-89-05839-0}{https://doi.org/10.1215/S0012-7094-89-05839-0}.

\bibitem{corti2013asymptotically}
A.~Corti, M.~Haskins, J.~Nordstr{\"o}m, and T.~Pacini.
\newblock Asymptotically cylindrical {C}alabi--{Y}au 3-folds from weak {F}ano 3-folds.
\newblock {\em Geom. Topol.}, 17(4):1955--2059, 2013.
\newblock \href{https://doi.org/10.2140/gt.2013.17.1955}{https://doi.org/10.2140/gt.2013.17.1955}.

\bibitem{corti2015g}
A.~Corti, M.~Haskins, J.~Nordstr{\"o}m, and T.~Pacini.
\newblock {$G_2$}-manifolds and associative submanifolds via semi-{F}ano 3-folds.
\newblock {\em Duke Math. J.}, 164(10):1971--2092, 2015.
\newblock \href{https://doi.org/10.1215/00127094-3120743}{https://doi.org/10.1215/00127094-3120743}.

\bibitem{crowley2015analytic}
D.~Crowley, S.~Goette, and J.~Nordstr\"{o}m.
\newblock An analytic invariant of {$G_2$}-manifolds.
\newblock {\em Invent. Math.}, 239(3):865--907, 2025.
\newblock \href{https://doi.org/10.1007/s00222-024-01310-z}{https://doi.org/10.1007/s00222-024-01310-z}.

\bibitem{demailly2004numerical}
J.-P. Demailly and M.~Paun.
\newblock Numerical characterization of the {K}{\"a}hler cone of a compact {K}{\"a}hler manifold.
\newblock {\em Ann. of Math. (2)}, pages 1247--1274, 2004.
\newblock \href{http://www.jstor.org/stable/3597177}{http://www.jstor.org/stable/3597177}.

\bibitem{fernandez1982riemannian}
M.~Fern{\'a}ndez and A.~Gray.
\newblock Riemannian manifolds with structure group {$G_2$}.
\newblock {\em Ann. Mat. Pura Appl.}, 132(1):19--45, 1982.
\newblock \href{https://doi.org/10.1007/BF01760975}{https://doi.org/10.1007/BF01760975}.

\bibitem{grigorian2009local}
S.~Grigorian and S.-T. Yau.
\newblock Local geometry of the {$G_2$} moduli space.
\newblock {\em Commun. Math. Phys.}, 287(2):459--488, 2009.
\newblock \href{https://doi.org/10.1007/s00220-008-0595-1}{https://doi.org/10.1007/s00220-008-0595-1}.

\bibitem{joyce1996compacti}
D.~D. Joyce.
\newblock Compact {R}iemannian 7-manifolds with holonomy {$G_2$}. {I}.
\newblock {\em J. Differential Geom.}, 43:291--328, 1996.
\newblock \href{https://doi.org/10.4310/jdg/1214458109}{https://doi.org/10.4310/jdg/1214458109}.

\bibitem{joyce1996compactii}
D.~D. Joyce.
\newblock Compact {R}iemannian 7-manifolds with holonomy {$G_2$}. {II}.
\newblock {\em J. Differential Geom.}, 43:329--375, 1996.
\newblock \href{https://doi.org/10.4310/jdg/1214458110}{https://doi.org/10.4310/jdg/1214458110}.

\bibitem{joyce2000compact}
D.~D. Joyce.
\newblock {\em Compact manifolds with special holonomy}.
\newblock Oxford University Press, 2000.

\bibitem{joyce2007riemannian}
D.~D. Joyce.
\newblock {\em Riemannian holonomy groups and calibrated geometry}.
\newblock Oxford University Press, 2007.

\bibitem{joyce2021new}
D.~D. Joyce and S.~Karigiannis.
\newblock A new construction of compact torsion-free {$G_2$}-manifolds by gluing families of {E}guchi--{H}anson spaces.
\newblock {\em J. Differential Geom.}, 117(2):255--343, 2021.
\newblock \href{https://doi.org/10.4310/jdg/1612975017}{https://doi.org/10.4310/jdg/1612975017}.

\bibitem{karigiannis2009desingularization}
S.~Karigiannis.
\newblock Desingularization of {$G_2$} manifolds with isolated conical singularities.
\newblock {\em Geom. Topol.}, 13(3):1583--1655, 2009.
\newblock \href{http://dx.doi.org/10.2140/gt.2009.13.1583}{http://dx.doi.org/10.2140/gt.2009.13.1583}.

\bibitem{karigiannis2009hodge}
S.~Karigiannis and N.~C. Leung.
\newblock Hodge theory for {$G_2$}-manifolds: intermediate {J}acobians and {A}bel--{J}acobi maps.
\newblock {\em Proc. London Math. Soc.}, 99(2):297--325, 2009.
\newblock \href{https://doi.org/10.1112/plms/pdp004}{https://doi.org/10.1112/plms/pdp004}.

\bibitem{kovalev2003twisted}
A.~Kovalev.
\newblock Twisted connected sums and special {R}iemannian holonomy.
\newblock {\em J. Reine Angew. Math.}, 2003.
\newblock \href{https://doi.org/10.1515/crll.2003.097}{https://doi.org/10.1515/crll.2003.097}.

\bibitem{lebrun1994kummer}
C.~LeBrun and M.~Singer.
\newblock A {K}ummer-type construction of self-dual 4-manifolds.
\newblock {\em Math. Ann.}, 300(1):165--180, 1994.
\newblock \href{https://doi.org/10.1007/BF01450482}{https://doi.org/10.1007/BF01450482}.

\bibitem{li2023resolution}
J.~Li.
\newblock On the geometry of resolutions of {$G_2$}-manifolds with {ICS}.
\newblock Talk at the workshop of the Simons Collaboration on Special Holonomy in Geometry, Analysis and Physics, 2023.
\newblock \href{https://sites.duke.edu/scshgap/jin-li-lectures/}{https://sites.duke.edu/scshgap/jin-li-lectures/}.

\bibitem{magnusson2012metriques}
G.~T. Magn\'usson.
\newblock {\em Natural metrics associated to families of compact {K}\"{a}hler manifolds}.
\newblock PhD thesis, Institut Fourier, Universit\'{e} de Grenoble, 2012.
\newblock \href{https://theses.hal.science/tel-00849096}{https://theses.hal.science/tel-00849096}.

\bibitem{nordstrom2023extra}
J.~Nordstr\"{o}m.
\newblock Extra-twisted connected sum {$G_2$}-manifolds.
\newblock {\em Ann. Global Anal. Geom.}, 63, 2023.
\newblock \href{https://doi.org/10.1007/s10455-023-09893-1}{https://doi.org/10.1007/s10455-023-09893-1}.

\bibitem{ooguri2007geometry}
H.~Ooguri and C.~Vafa.
\newblock On the geometry of the string landscape and the swampland.
\newblock {\em Nucl. Phys. B}, 766(1-3):21--33, 2007.
\newblock \href{https://doi.org/10.1016/j.nuclphysb.2006.10.033}{https://doi.org/10.1016/j.nuclphysb.2006.10.033}.

\bibitem{page1978physical}
D.~N. Page.
\newblock A physical picture of the {K3} gravitational instanton.
\newblock {\em Phys. Lett. B}, 80(1-2):55--57, 1978.
\newblock \href{https://doi.org/10.1016/0370-2693(78)90305-2}{https://doi.org/10.1016/0370-2693(78)90305-2}.

\bibitem{topiwala1987new}
P.~Topiwala.
\newblock A new proof of the existence of {K}{\"a}hler--{E}instein metrics on {K3}, {I}.
\newblock {\em Invent. Math.}, 89(2):425--448, 1987.
\newblock \href{http://eudml.org/doc/143489}{http://eudml.org/doc/143489}.

\bibitem{vafa2005string}
C.~Vafa.
\newblock The string landscape and the swampland.
\newblock {\em arXiv \href{https://arxiv.org/abs/hep-th/0509212}{hep-th/0509212}}, 2005.

\bibitem{wang1997incompleteness}
C.-L. Wang.
\newblock On the incompleteness of the {W}eil--{P}etersson metric along degenerations of {C}alabi--{Y}au manifolds.
\newblock {\em Math. Res. Lett.}, 4(1):157--171, 1997.
\newblock \href{https://doi.org/10.4310/MRL.1997.v4.n1.a14}{https://doi.org/10.4310/MRL.1997.v4.n1.a14}.

\bibitem{yau1978ricci}
S.-T. Yau.
\newblock On the {R}icci curvature of a compact {K}{\"a}hler manifold and the complex {M}onge--{A}mp{\`e}re equation, {I}.
\newblock {\em Commun. Pure Appl. Math.}, 31(3):339--411, 1978.
\newblock \href{https://doi.org/10.1002/cpa.3160310304}{https://doi.org/10.1002/cpa.3160310304}.

\end{thebibliography}

\end{document}